\theoremstyle{plain}
\newtheorem{theorem}{Theorem}
\newtheorem*{theorem*}{Theorem}
\newtheorem{lemma}[theorem]{Lemma}
\newtheorem{corollary}[theorem]{Corollary}
\title{Quantum walks and the size of the graph}
\author{Gabriel Coutinho \\ \small{Dept. Computer Science}\\ \small{Universidade Federal de Minas Gerais} \\ \small{Belo Horizonte, Brazil} \\ \texttt{gabriel@dcc.ufmg.br}}
\date{\today}
\begin{document}

\maketitle

\begin{abstract}
A continuous-time quantum walk is modelled using a graph. In this short paper, we provide lower bounds on the size of a graph that would allow for some quantum phenomena to occur. Among other things, we show that, in the adjacency matrix quantum walk model, the number of edges is bounded below by a cubic function on the eccentricity of a periodic vertex. This gives some idea on the shape of a graph that would admit periodicity or perfect state transfer. We also raise some extremal type of questions in the end that could lead to future research.
\end{abstract}
\ \\
\texttt{Keywords: quantum walk; spectral bounds; state transfer.} \\
\texttt{MSC: 05C50; 81P68.}

\section{Introduction}
We model a network of interacting qubits in the continuous-time quantum walk XY model by a simple graph $G$ with adjacency matrix $A$, which encodes the pairs of qubits that interact. We are interested in the single-excitation subspace, that is the case when the system is initialized with one qubit in state $\ket 1$ and all others in state $\ket 0$. This model with a time-independent Hamiltonian evolves according to Schrödinger equation, and the main problem we address is what happens to the state $\ket 1$ as time passes. For example, if this state is observed at another vertex with probability $1$ after a certain time, we say that \textit{perfect state transfer} has happened. This is equivalent to having a pair of vertices $a$ and $b$ and a time $\tau \geq 0$ such that
\[|\exp(\ii \tau A)_{a,b}| = 1.\]
When $a = b$ in the equation above, we say that vertex $a$ is periodic. Vertices involved in perfect state transfer are periodic at double the time, but the converse does not hold in general.

Perfect state transfer was first considered by Bose \cite{BoseQuantumComPaths} and has since been studied in a good number of papers and in different contexts, spanning the fields of physics, mathematics and computer science. Surveys are found in Kendon and Tamon \cite{KendonTamon} and Godsil \cite{GodsilStateTransfer12}. The thesis \cite{CoutinhoPhD} contains a detailed introduction and a more recent compilation of known results.

Christandl et al.~\cite{ChristandlPSTQuantumSpinNet2} showed that the paths on two and three vertices admit perfect state transfer between the end vertices, and that if a graph admits perfect state transfer, then any Cartesian power of this graph also admits perfect state transfer at the same time. The pictures below depict the Cartesian squares and cubes of paths on 2 and 3 vertices, and the vertices in black are involved in perfect state transfer.

\begin{center}
\includegraphics[scale=0.4]{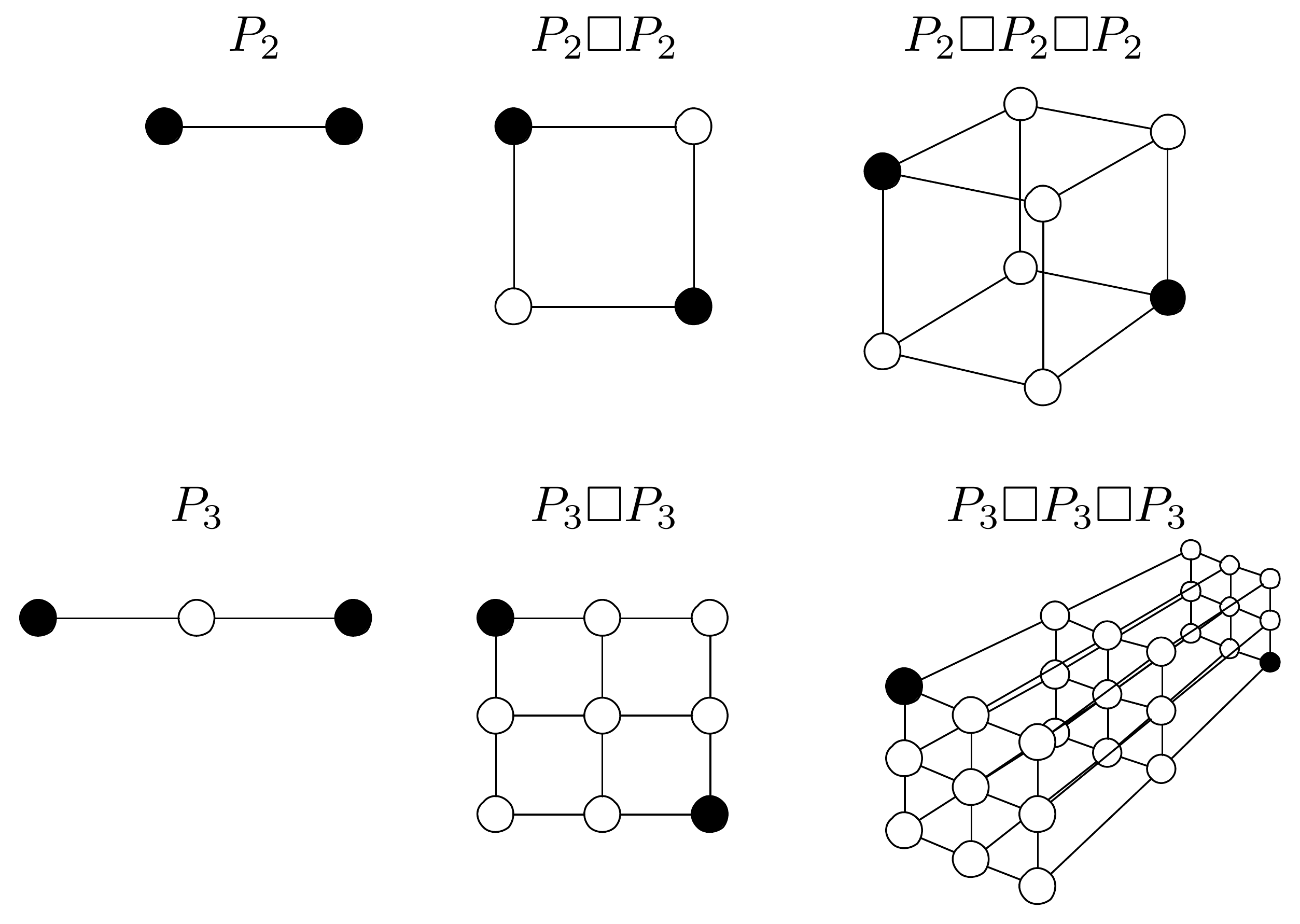}
\end{center}

Suppose a graph $G$ admits perfect state transfer between vertices at distance $d$. In this paper, we are interested in determining what is the cost in terms of the number of vertices or edges. To this day, the best known trade-off is achieved by the Cartesian powers of $P_3$, in which perfect state transfer between vertices at even distance $d$ happens in a graph with $3^{d/2}$ vertices and $d \cdot 3^{(d/2) - 1}$ edges. However, there is no known lower bound on the number of edges in terms of the diameter alone other than a trivial linear bound. See \cite[section E]{KayPerfectcommunquantumnetworks} for some discussion.

As we mentioned, vertices involved in perfect state transfer must be periodic. This turns out to be the key concept needed to bound the size of the graph. More specifically, we will derive bounds that depend on the eccentricity of a periodic vertex, and also on the period.

\section{The eccentricity of periodic vertices}

In our considerations below, $M$ is a symmetric integer matrix whose rows and columns are indexed by the vertices of a graph, and whose an off diagonal entry is non-zero if and only if the corresponding pair of vertices is adjacent. Moreover, we assume the sign of all off-diagonal entries is the same. For example, $M$ can be the adjacency, the Laplacian, or the signless Laplacian, as well as weighted versions of these matrices, provided all weights are integers with the same sign.

We say that vertex $a$ is periodic in $G$ according to $M$ if there is a positive real time $\tau$ such that $\exp(\ii \tau M)_{a,a}$ has absolute value equal to one.

Let $\theta_0 > ... > \theta_t$ be the distinct eigenvalues of $M$. If $a$ is a vertex of $G$, we denote by $\ee_a$ the $01$-vector in $\R^n$ that is $0$ everywhere except for the position corresponding to $a$. Because $M$ is symmetric, it has a spectral decomposition into orthogonal idempotents as follows.
\[M = \sum_{r = 0}^t \theta_r E_r.\]
Let $\Phi_a$ denote the set of eigenvalues of $M$ such that $\theta_r \in \Phi_a$ if and only if $E_r \ee_a \neq 0$. This is the eigenvalue support of $a$.

\begin{theorem}[Godsil \cite{GodsilPerfectStateTransfer12}, Theorem 6.1]
If vertex $a$ is periodic in $G$ according to $M$, then the non-zero elements in $\Phi_a$ are either all integers or all quadratic integers. Moreover, there is a square-free positive integer $\Delta$, an integer $\alpha$ and integers $\beta_r$ such that
\[\theta_r \in \Phi_a \implies \theta_r = \frac{1}{2} (\alpha+\beta_r\sqrt{\Delta}).\]
We consider $\Delta = 1$ for the cases where all eigenvalues are integers.
\end{theorem}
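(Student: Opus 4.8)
The plan is to convert periodicity into an arithmetic (ratio) condition on the eigenvalue support, and then read off the field-theoretic conclusion using the fact that the eigenvalues are algebraic integers. First I would expand the diagonal entry of the propagator via the spectral decomposition,
\[
\exp(\ii\tau M)_{a,a} = \sum_{r=0}^{t} e^{\ii\tau\theta_r}\,(E_r)_{a,a}.
\]
Since each $E_r$ is a symmetric idempotent, $(E_r)_{a,a} = \ee_a^{\top} E_r\,\ee_a = \|E_r\ee_a\|^2$ is a non-negative real, these weights sum to one because $\sum_r E_r = I$, and they are strictly positive exactly when $\theta_r\in\Phi_a$. Hence $\exp(\ii\tau M)_{a,a}$ is a convex combination of the unit-modulus numbers $e^{\ii\tau\theta_r}$ with $\theta_r\in\Phi_a$. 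The triangle inequality is tight for such a combination only when all participating unit numbers coincide, so $\lvert\exp(\ii\tau M)_{a,a}\rvert=1$ forces $e^{\ii\tau\theta_r}=e^{\ii\tau\theta_s}$, that is $\tau(\theta_r-\theta_s)\in2\pi\mathbb{Z}$, for all $\theta_r,\theta_s\in\Phi_a$. Taking quotients of two such relations yields the \emph{ratio condition}: every quotient of differences of support eigenvalues is rational.

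Next I would bring in Galois theory. Because $M$ has integer entries its characteristic polynomial is over $\mathbb{Z}$, so each $\theta_r$ is an algebraic integer and any automorphism $\sigma$ of the splitting field permutes the eigenvalues. Writing $E_r=\prod_{s\neq r}(M-\theta_s I)/(\theta_r-\theta_s)$ shows that $\sigma$ carries $E_r$ to the idempotent of $\sigma(\theta_r)$, whence $\sigma((E_r)_{a,a})=(E_{r'})_{a,a}$ with $\theta_{r'}=\sigma(\theta_r)$. As $\sigma$ sends nonzero entries to nonzero entries, $\Phi_a$ is closed under the Galois action. Now fix $\theta_r,\theta_s\in\Phi_a$ with $\delta=\theta_r-\theta_s\neq0$; for every $\sigma$ both $\sigma(\theta_r),\sigma(\theta_s)$ lie in $\Phi_a$, so by the ratio condition $\sigma(\delta)=\sigma(\theta_r)-\sigma(\theta_s)$ is a rational multiple of $\delta$. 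The map $\sigma\mapsto\sigma(\delta)/\delta$ is then a homomorphism from the (finite) Galois group into $\mathbb{Q}^{*}$, so its image lies in $\{\pm1\}$, the only finite subgroup. Therefore $\sigma(\delta^2)=\delta^2$ for all $\sigma$, giving $\delta^2\in\mathbb{Q}$ and $\delta=c\sqrt{\Delta}$ for some rational $c$ and squarefree positive integer $\Delta$ (positive since the eigenvalues are real).

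Finally, Galois-stability of $\Phi_a$ makes the average $\bar\theta=\lvert\Phi_a\rvert^{-1}\sum_{\theta_r\in\Phi_a}\theta_r$ fixed by every $\sigma$, hence rational, and each $\theta_r-\bar\theta$ is a rational combination of differences $\theta_r-\theta_s$, so a rational multiple of $\delta$. Thus every $\theta_r\in\Phi_a$ equals $\bar\theta+q_r\sqrt{\Delta}$ with $q_r\in\mathbb{Q}$, lying in $\mathbb{Q}(\sqrt{\Delta})$ (or in $\mathbb{Q}$ when $\Delta=1$). Being an algebraic integer in a quadratic field, each has the stated form $\tfrac12(\alpha+\beta_r\sqrt{\Delta})$; the common rational part $\bar\theta$ supplies a single $\alpha=2\bar\theta$, which is an integer because it is the trace $\theta_r+\theta_r^{\sigma}$ of an algebraic integer, and since $\theta_r-\theta_r^{\sigma}=\beta_r\sqrt{\Delta}$ is an algebraic integer its square $\beta_r^2\Delta$ is a rational integer, so reducing $\beta_r$ against the squarefree $\Delta$ forces $\beta_r\in\mathbb{Z}$.

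I expect the main obstacle to be the number-theoretic bridge: establishing that $\Phi_a$ is stable under the Galois action and, above all, that the ratio condition forces a \emph{single} quadratic irrationality $\sqrt{\Delta}$ rather than mere pairwise commensurability. Once $\delta^2\in\mathbb{Q}$ has been extracted through the homomorphism into $\{\pm1\}$, the remaining verification of the precise half-integer shape is routine bookkeeping in the ring of integers of $\mathbb{Q}(\sqrt{\Delta})$.
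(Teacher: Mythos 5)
Your proposal is correct and takes essentially the same route as the proof this paper implicitly relies on (the theorem is imported from Godsil without proof, and Godsil's argument is exactly this): periodicity forces the ratio condition via tightness of the triangle inequality in the convex combination $\sum_r (E_r)_{a,a}\, e^{\mathrm{i}\tau\theta_r}$, the eigenvalue support is Galois-stable, the homomorphism $\sigma \mapsto \sigma(\delta)/\delta$ into the torsion subgroup $\{\pm 1\}$ of $\mathbb{Q}^{*}$ gives $\delta^{2} \in \mathbb{Z}$, and algebraic integrality in $\mathbb{Q}(\sqrt{\Delta})$ yields the half-integer form. One cosmetic slip: in the case $\Delta = 1$ the trace argument fails and $2\bar\theta$ need not be an integer (e.g.\ support $\{0,1,3\}$), but there all support eigenvalues are already integers and the stated form holds trivially with $\alpha = 0$ and $\beta_r = 2\theta_r$.
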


If $M = L(G)$, note that $0$ is always an eigenvalue in the eigenvalue support of any vertex. Moreover, $L(G) \succeq 0$, and so no eigevalue of $L(G)$ can be of the form $b \sqrt{\Delta}$ with $b>0$ and $\Delta > 1$, as this would imply that $-b\sqrt{\Delta}$ is also an eigenvalue. As a consequence, we have the following corollary.

\begin{corollary}
If $M = L(G)$ and vertex $a$ is periodic, then the elements in $\Phi_a$ are all integers.
\end{corollary}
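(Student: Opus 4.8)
The plan is to invoke the structural result just stated (Godsil's theorem) and then rule out the quadratic-integer case for the Laplacian using the two observations the author has already flagged: that $0 \in \Phi_a$ always, and that $L(G) \succeq 0$. By the theorem, if $a$ is periodic then every non-zero element of $\Phi_a$ is either an integer or a quadratic integer, and there is a square-free $\Delta$, an integer $\alpha$, and integers $\beta_r$ with $\theta_r = \tfrac{1}{2}(\alpha + \beta_r\sqrt{\Delta})$ for each $\theta_r \in \Phi_a$. The goal is to show $\Delta = 1$, which by the theorem's convention is exactly the statement that all eigenvalues in $\Phi_a$ are integers.

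First I would pin down $\alpha$ using the fact that $0 \in \Phi_a$. Since $0$ is an eigenvalue of $L(G)$ lying in the support of every vertex, there is an index $r_0$ with $\theta_{r_0} = 0 = \tfrac{1}{2}(\alpha + \beta_{r_0}\sqrt{\Delta})$. If $\Delta > 1$ then $\sqrt{\Delta}$ is irrational, so this forces both $\alpha = 0$ and $\beta_{r_0} = 0$. With $\alpha = 0$, the parametrization collapses to $\theta_r = \tfrac{1}{2}\beta_r\sqrt{\Delta}$ for all $\theta_r \in \Phi_a$.

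The key step is then to derive a contradiction from positive semidefiniteness. Assuming $\Delta > 1$, suppose some non-zero eigenvalue in $\Phi_a$ has the form $\theta_r = \tfrac{1}{2}\beta_r\sqrt{\Delta}$ with $\beta_r \neq 0$. Eigenvalues of $L(G)$ arising from a periodic vertex in the quadratic case come in Galois-conjugate pairs: applying the nontrivial automorphism of $\mathbb{Q}(\sqrt{\Delta})$ (sending $\sqrt{\Delta} \mapsto -\sqrt{\Delta}$) to the minimal polynomial shows that $-\tfrac{1}{2}\beta_r\sqrt{\Delta}$ is also an eigenvalue of the integer matrix $L(G)$. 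But one of $\pm\tfrac{1}{2}\beta_r\sqrt{\Delta}$ is strictly negative, contradicting $L(G) \succeq 0$. Hence no such non-zero $\beta_r$ can exist, which is impossible unless $\Delta = 1$ — equivalently, all elements of $\Phi_a$ are integers.

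The main obstacle I anticipate is making the Galois-conjugacy argument fully rigorous, specifically justifying that the conjugate $-\tfrac{1}{2}\beta_r\sqrt{\Delta}$ of an eigenvalue is genuinely an eigenvalue of $L(G)$. This is standard for integer (hence rational-coefficient) matrices: the characteristic polynomial has integer coefficients, so its irreducible factors over $\mathbb{Q}$ have all their roots appearing as eigenvalues, and the conjugate of any algebraic-integer eigenvalue is a root of the same irreducible factor. I would phrase this cleanly and cite the symmetry of the spectrum under $\sqrt{\Delta} \mapsto -\sqrt{\Delta}$, exactly as the author sketches, rather than reproving the general theory of conjugate eigenvalues. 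Everything else is a short deduction, so the proof should be only a few lines once this point is stated.
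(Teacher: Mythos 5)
Your proposal is correct and follows essentially the same route as the paper, which proves the corollary in the two sentences immediately preceding it: $0 \in \Phi_a$ pins down $\alpha = 0$ in Godsil's parametrization, and since $L(G) \succeq 0$, an eigenvalue $b\sqrt{\Delta}$ with $\Delta > 1$ is ruled out because its conjugate $-b\sqrt{\Delta}$ would also be an eigenvalue. Your write-up merely makes explicit the Galois-conjugacy justification that the paper leaves implicit (and note the phrase ``impossible unless $\Delta = 1$'' is slightly off in the degenerate case $\Phi_a = \{0\}$, where integrality holds trivially anyway).
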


We will also need the following result.

\begin{theorem}[Coutinho \cite{CoutinhoPhD}, Theorem 2.4.4] \label{thm:1}
Suppose vertex $a$ is periodic in $G$ according to $M$ at time $\tau$. Let 
\[ g = \gcd\left( \left\{ \frac{\theta_0 - \theta_r}{\sqrt{\Delta}} \right\}_{\theta_r \in \Phi_a} \right).\]
Then $\tau$ must be an odd multiple of $\dfrac{2 \pi}{g \sqrt{\Delta}}$.
\end{theorem}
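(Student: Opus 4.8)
The plan is to convert the analytic condition $|\exp(\ii\tau M)_{a,a}| = 1$ into an arithmetic condition on $\tau$, and then feed in the quadratic-integer description of $\Phi_a$ from the preceding theorem of Godsil. First I would expand the diagonal entry of the propagator through the spectral decomposition $M = \sum_r \theta_r E_r$. Since each idempotent is symmetric, $\exp(\ii\tau M)_{a,a} = \sum_r e^{\ii\tau\theta_r}\,\ee_a^\top E_r \ee_a = \sum_{\theta_r\in\Phi_a} e^{\ii\tau\theta_r}\,\|E_r\ee_a\|^2$, where the coefficients $\|E_r\ee_a\|^2$ are strictly positive precisely for $\theta_r\in\Phi_a$, are non-negative in general, and sum to $\ee_a^\top\bigl(\sum_r E_r\bigr)\ee_a = 1$. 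Hence $\exp(\ii\tau M)_{a,a}$ is a convex combination, with all weights positive over the support, of the unit-modulus numbers $e^{\ii\tau\theta_r}$ for $\theta_r\in\Phi_a$.

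The first key step is a convexity observation: a convex combination of points on the unit circle with positive weights has modulus $1$ only when those points coincide. So periodicity at $\tau$ forces the numbers $e^{\ii\tau\theta_r}$ to share one common value over $\theta_r\in\Phi_a$, which is equivalent to $\tau(\theta_r-\theta_s)\in 2\pi\mathbb{Z}$ for all $\theta_r,\theta_s\in\Phi_a$, and thus to $\tau(\theta_0-\theta_r)\in 2\pi\mathbb{Z}$ for each $\theta_r\in\Phi_a$ once a reference is fixed. Now I would apply Godsil's theorem: writing $\theta_r = \tfrac12(\alpha+\beta_r\sqrt{\Delta})$, the quantities $\frac{\theta_0-\theta_r}{\sqrt{\Delta}} = \tfrac12(\beta_0-\beta_r)$ are rational, so their $\gcd$, namely $g$, is well defined, and $\frac{\theta_0-\theta_r}{\sqrt{\Delta}} = g\,m_r$ with integers $m_r$ of overall $\gcd$ equal to $1$. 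The conditions then read $\tau g\sqrt{\Delta}\,m_r \in 2\pi\mathbb{Z}$ for all $r$; since $\gcd(m_r)=1$, Bézout's identity shows this system holds exactly when $\tau \in \frac{2\pi}{g\sqrt{\Delta}}\mathbb{Z}$. This already establishes that $\tau$ is an integer multiple of $\frac{2\pi}{g\sqrt{\Delta}}$.

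The remaining and most delicate point is the parity: upgrading \emph{integer multiple} to \emph{odd multiple}. This cannot come from the soft convexity argument alone, and I expect it to be the main obstacle; the extra leverage must come from the finer arithmetic of the eigenvalue support. Concretely, I would track the common phase $\gamma = e^{\ii\tau\theta_0}$ as $\tau$ runs through consecutive multiples of $\frac{2\pi}{g\sqrt{\Delta}}$, and try to extract a parity relation among the integers $m_r$ (equivalently among the $\beta_r$) forced by Godsil's structure theorem that rules out the even multiples. Isolating and justifying exactly that parity constraint is where I anticipate the real work lies, in contrast to the essentially formal reductions of the first two paragraphs.
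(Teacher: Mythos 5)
Your first two paragraphs are correct, and they follow the standard route (the same one underlying the cited source): expand $\exp(\ii\tau M)_{a,a}$ spectrally, note it is a convex combination with strictly positive weights $\|E_r\ee_a\|^2$ over $\Phi_a$ of unit-modulus phases, use the equality case of the triangle inequality to force $e^{\ii\tau\theta_r}$ to be constant on $\Phi_a$, and then convert $\tau(\theta_0-\theta_r)\in 2\pi\mathbb{Z}$ into $\tau\in\frac{2\pi}{g\sqrt{\Delta}}\mathbb{Z}$ via B\'ezout. Two small points worth making explicit: you need the reference eigenvalue $\theta_0$ to lie in $\Phi_a$ for the gcd in the statement to be the relevant one (this follows from Perron--Frobenius, given connectivity and the same-sign hypothesis on the off-diagonal entries of $M$); and since each $\theta_r=\frac12(\alpha+\beta_r\sqrt{\Delta})$ must be an algebraic integer, all $\beta_r$ have the same parity as $\alpha$, so the quantities $\frac12(\beta_0-\beta_r)$ are honest integers and $g\geq 1$ --- which is exactly what makes the paper's remark that the minimum period is at most $2\pi$ go through.

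The parity upgrade you hunt for in your third paragraph does not exist, and your suspicion that convexity cannot deliver it should be pushed to its logical conclusion: \emph{no} argument can, because the ``odd multiple'' claim is false for periodicity as literally stated. Your own equivalence shows the set of periods of $a$ is exactly $\frac{2\pi}{g\sqrt{\Delta}}\mathbb{Z}_{>0}$; in particular, if $\tau$ is a period then so is $2\tau$, an even multiple. Concretely, take $G=P_2$ with $M=A$: then $\Phi_a=\{1,-1\}$, $\Delta=1$, $g=\gcd(0,2)=2$, so $\frac{2\pi}{g\sqrt{\Delta}}=\pi$, yet $\exp(2\pi\ii A)_{a,a}=\cos 2\pi=1$, so $\tau=2\pi$ is a period and is an \emph{even} multiple of $\pi$. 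The genuine odd-multiple phenomenon lives one level up, for perfect state transfer: if PST from $a$ to $b$ occurs at time $\tau_0$, then at $2\tau_0$ the walker is back at $a$, so PST times are odd multiples of the minimal one, and the parity is supplied by the signs $\sigma_r=\pm 1$ in $E_r\ee_a=\sigma_r E_r\ee_b$, both signs necessarily occurring --- that is the arithmetic ingredient you correctly sensed was missing, but it is simply unavailable under the periodicity hypothesis alone, and it is presumably how ``odd'' migrated into this transcription of the thesis result. Nothing downstream in this paper is affected: the proof of Lemma \ref{lem:1} and the bound $\tau\leq 2\pi$ for the minimum period use only the integer-multiple conclusion, which your paragraphs 1--2 establish completely; so your proof should be regarded as a full proof of the correct (integer-multiple) form of the theorem rather than as an incomplete proof of the stated one.
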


As a consequence of the Theorem above, the minimum time periodicity occurs is at most $2\pi$.

As an immediate consequence, we have the following lemma.

\begin{lemma} \label{lem:1}
Suppose vertex $a$ is periodic in $G$ according to $M$ at minimum time $\tau$, and let $\theta$ and $\theta'$ be distinct eigenvalues in $\Phi_a$. Then
\[\theta - \theta' \ \geq\  \frac{2\pi}{\tau}.\]
\end{lemma}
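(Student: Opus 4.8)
The plan is to read the bound directly off the two preceding theorems, since the lemma is really a restatement of the divisibility structure they impose on $\Phi_a$. First I would fix the notation of Theorem~\ref{thm:1}: let $\Delta$ be the square-free integer, $\theta_0$ the top eigenvalue appearing there, and
\[ g = \gcd\left( \left\{ \frac{\theta_0 - \theta_r}{\sqrt{\Delta}} \right\}_{\theta_r \in \Phi_a} \right). \]
By Godsil's theorem every element of $\Phi_a$ has the form $\tfrac12(\alpha + \beta_r\sqrt{\Delta})$ with $\beta_r \in \mathbb{Z}$ and a common $\alpha$, so each quantity $(\theta_0 - \theta_r)/\sqrt{\Delta} = (\beta_0 - \beta_r)/2$ is a half-integer; hence $g$ is well defined and every such quantity is an integer multiple of $g$.

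The key step is to observe that the difference of any two eigenvalues in the support is a multiple of $g\sqrt{\Delta}$. Taking the distinct $\theta, \theta' \in \Phi_a$ and assuming without loss of generality $\theta > \theta'$, I would write
\[ \frac{\theta-\theta'}{\sqrt{\Delta}} = \frac{\theta_0 - \theta'}{\sqrt{\Delta}} - \frac{\theta_0 - \theta}{\sqrt{\Delta}}. \]
Both terms on the right are integer multiples of $g$, because $\theta, \theta' \in \Phi_a$ means each appears among the quantities whose gcd defines $g$. Therefore $(\theta-\theta')/\sqrt{\Delta}$ is itself an integer multiple of $g$, so $\theta-\theta'$ is an integer multiple of $g\sqrt{\Delta}$. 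Since $\theta \neq \theta'$ this multiple is nonzero, giving $\theta - \theta' \geq g\sqrt{\Delta}$.

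It then remains to relate $g\sqrt{\Delta}$ to the minimum time $\tau$. By Theorem~\ref{thm:1}, $\tau$ is a positive odd multiple of $2\pi/(g\sqrt{\Delta})$; as $\tau$ is the \emph{minimum} such time, the multiple is $1$, so $\tau = 2\pi/(g\sqrt{\Delta})$ and hence $g\sqrt{\Delta} = 2\pi/\tau$. (Even if one only uses that $\tau$ is some positive multiple, one still gets $\tau \geq 2\pi/(g\sqrt{\Delta})$, i.e. $g\sqrt{\Delta} \geq 2\pi/\tau$, which is all that is needed.) Combining with the previous paragraph yields $\theta - \theta' \geq g\sqrt{\Delta} \geq 2\pi/\tau$, as claimed.

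I do not expect a serious obstacle, since the result is flagged as an immediate consequence. The one point demanding care is the divisibility bookkeeping: one must confirm that $g$ genuinely divides each $(\theta_0 - \theta_r)/\sqrt{\Delta}$ in the sense of a gcd of commensurable rationals, so that subtracting two of them leaves an integer multiple of $g$. This is precisely what Godsil's theorem secures by forcing the eigenvalues of $\Phi_a$ into a single ring $\tfrac12\mathbb{Z}[\sqrt{\Delta}]$ with a common $\alpha$, and it is the only place where that structural theorem is actually invoked.
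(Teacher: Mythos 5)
Your proposal is correct and takes essentially the same route as the paper: both read off from Theorem~\ref{thm:1} that the difference of two distinct eigenvalues in $\Phi_a$ is a nonzero integer multiple of $g\sqrt{\Delta}$, which is at least $2\pi/\tau$. The paper compresses this into one line (``there is an integer $k$ with $\frac{2\pi}{\tau}k = \theta - \theta'$''), while you simply make explicit the gcd bookkeeping and the use of minimality of $\tau$ --- your parenthetical fallback ($\tau \geq 2\pi/(g\sqrt{\Delta})$ suffices) is a careful touch, since Theorem~\ref{thm:1} alone does not guarantee periodicity actually occurs at $2\pi/(g\sqrt{\Delta})$.
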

\begin{proof}
Suppose $\theta$ and $\theta'$ are two elements of $\Phi_a$. Then, by Theorem \ref{thm:1}, there is an integer $k$ such that
\[\frac{2\pi}{\tau} \ k =  \theta - \theta'.\]
\end{proof}

Let $\varepsilon_a$ denote the eccentricity of vertex $a$ in the graph $G$, that is, the maximum distance between any vertex of $G$ and vertex $a$. A standard argument in algebraic graph theory leads to a relation between $\varepsilon_a$ and $|\Phi_a|$.
\begin{lemma} \label{lem:2}
	Let $M$ be as defined in the beginning of this section. Let $\Phi_a$ be the  eigenvalue support of vertex $a$ according to $M$. Then $\varepsilon_a \leq |\Phi_a|$.
\end{lemma}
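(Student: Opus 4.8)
\emph{Proof proposal.} The plan is to compare two subspaces of $\R^n$: the Krylov space generated by $\ee_a$ under $M$, and a space exhibited through a distance filtration of the graph. First I would record that, by the spectral decomposition $M = \sum_r \theta_r E_r$, we have $M^k \ee_a = \sum_{\theta_r \in \Phi_a} \theta_r^k \, E_r \ee_a$ for every $k \geq 0$, since $E_r \ee_a = 0$ exactly when $\theta_r \notin \Phi_a$. The vectors $\{E_r \ee_a : \theta_r \in \Phi_a\}$ are nonzero and mutually orthogonal, hence linearly independent, and a Vandermonde argument shows that $\{M^k \ee_a : k \geq 0\}$ spans the same subspace. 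Therefore the Krylov space $\mathcal{K} = \operatorname{span}\{\ee_a, M\ee_a, M^2 \ee_a, \dots\}$ has dimension exactly $|\Phi_a|$.

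Next I would produce $\varepsilon_a + 1$ linearly independent vectors inside $\mathcal{K}$. Assuming $G$ is connected, for each $j \in \{0, 1, \dots, \varepsilon_a\}$ choose a vertex $b_j$ with $\operatorname{dist}(a, b_j) = j$ (such a vertex exists along a shortest path to a vertex realizing the eccentricity). The key observation is that $(M^k)_{b_j, a}$ counts $M$-weighted walks of length $k$ from $a$ to $b_j$: this entry vanishes whenever $k < j$, because no walk shorter than the distance can connect the two vertices, and it is nonzero when $k = j$. I would then form the $(\varepsilon_a+1) \times (\varepsilon_a+1)$ matrix $W$ with $W_{j,k} = (M^k \ee_a)_{b_j}$; by the above it is triangular with nonzero diagonal, hence invertible, which forces $\ee_a, M\ee_a, \dots, M^{\varepsilon_a}\ee_a$ to be linearly independent. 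Combining the two steps gives $\varepsilon_a + 1 \leq \dim \mathcal{K} = |\Phi_a|$, which is even slightly stronger than the stated bound $\varepsilon_a \leq |\Phi_a|$.

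The step I expect to be the crux is establishing $(M^j)_{b_j, a} \neq 0$ for a shortest-path endpoint. A priori the weighted walk count is a sum of products of entries of $M$, and with arbitrary signs these terms could cancel. This is exactly where the standing hypothesis on $M$ is needed: since all off-diagonal entries share a common sign, every length-$j$ walk from $a$ to $b_j$ contributes a term of the same sign, so the sum cannot be zero. I would make sure to invoke this hypothesis explicitly, and to note that the argument uses $G$ connected so that each distance class up to $\varepsilon_a$ is nonempty.
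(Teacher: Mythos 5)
Your proof is correct and takes essentially the same route as the paper: it too identifies the Krylov space $\langle \{M^i \ee_a\}_{i \geq 0}\rangle$ with the span of $\{E_r \ee_a\}_{\theta_r \in \Phi_a}$ (so its dimension is $|\Phi_a|$) and then uses the same-sign hypothesis on the off-diagonal entries to conclude that $\ee_a, M\ee_a, \dots, M^{\varepsilon_a}\ee_a$ are independent, arriving at the same stronger bound $\varepsilon_a + 1 \leq |\Phi_a|$. The only difference is that you spell out the distance-filtration triangularity and no-cancellation details that the paper compresses into one sentence as a ``standard argument.''
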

\begin{proof}
	Consider the subspace of $\R^n$ defined as
	\[W_a = \langle \{ M^i \ee_a\}_{i \geq 0} \rangle.\]
	It follows that
	\[W_a = \langle \{ E_r \ee_a \}_{\theta_r \in \Phi_a} \rangle,\]
	hence $\dim W_a = |\Phi_a|$. Because the non-zero off-diagonal entries of $M$ correspond to adjacent vertices and have all the same sign, it follows that the vectors $\{M^i \ee_a\}_{i = 0}^{\varepsilon_a}$ are all independent, and thus
	\[\varepsilon_a +1\leq |\Phi_a|.\]
\end{proof}

We will now proceed to establish bounds on the size of the graph that depend solely on the eccentricity of a periodic vertex.

\begin{theorem}
	Let $G$ be a graph with $m$ edges that, according to the quantum walk model defined by the adjacency matrix, contains a periodic vertex $a$ with period $\tau$. Let $\varepsilon_a$ be the eccentricity of $a$. Then
	\[\left(\frac{\varepsilon_a}{3}\right)^3 < 2m.\]
\end{theorem}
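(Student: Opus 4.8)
The plan is to connect three quantities: the eccentricity $\varepsilon_a$, the number of distinct eigenvalues in the support $|\Phi_a|$, and the magnitudes of the eigenvalues, using the adjacency-matrix-specific bound on the spectral radius. By Lemma~\ref{lem:2} we already have $\varepsilon_a + 1 \leq |\Phi_a|$, so it suffices to bound $|\Phi_a|$ in terms of $m$. The idea is that the eigenvalues in $\Phi_a$ are spread out (Lemma~\ref{lem:1} forces consecutive gaps to be at least $2\pi/\tau$) but also confined to a bounded interval (the spectral radius of $A$ is controlled by $m$), and these two facts together cap how many eigenvalues can fit.

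First I would control the time. Since we are in the adjacency model, $A$ is an integer matrix, so $\Delta$ and $g$ from Theorem~\ref{thm:1} are positive integers; the minimum period $\tau$ is an odd multiple of $2\pi/(g\sqrt{\Delta})$, whence $\tau \geq 2\pi/(g\sqrt{\Delta})$ and in particular $2\pi/\tau \leq g\sqrt{\Delta}$. More usefully, $g\sqrt{\Delta}$ divides the real gap $\theta_0 - \theta_r$ for every $\theta_r \in \Phi_a$ after dividing by $\sqrt{\Delta}$, so all eigenvalues in $\Phi_a$ sit on an arithmetic-progression lattice of common spacing at least $g\sqrt{\Delta} \geq 2\pi/\tau$; hence by Lemma~\ref{lem:1} the gap between the largest and smallest eigenvalue in $\Phi_a$ is at least $(|\Phi_a|-1)\cdot \frac{2\pi}{\tau}$. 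Next I would bound this spread from above. The eigenvalues of $A$ lie in $[-\theta_0,\theta_0]$ where $\theta_0 = \theta_{\max}$ is the spectral radius, and a standard bound gives $\theta_0 \leq \sqrt{2m}$ (since $\sum_r \theta_r^2 = \operatorname{tr}(A^2) = 2m$ already forces $\theta_0^2 \leq 2m$). Therefore the total spread $\theta_0 - \theta_t \leq 2\theta_0 \leq 2\sqrt{2m}$.

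Combining the lower and upper bounds on the spread yields $(|\Phi_a|-1)\cdot \frac{2\pi}{\tau} \leq 2\sqrt{2m}$, i.e. a bound on $|\Phi_a|$ in terms of $\tau$ and $m$. The remaining subtlety is that $\tau$ itself is not a free parameter: I want a bound depending only on $\varepsilon_a$ and $m$, with no $\tau$. The way to close this is to note that the smallest integer realizing the lattice spacing is $g\sqrt{\Delta}$, and the eigenvalues of $A$ in $\Phi_a$, being either integers or quadratic integers $\frac12(\alpha+\beta_r\sqrt{\Delta})$, take at least $|\Phi_a|$ distinct values along a lattice of spacing $\frac12\sqrt{\Delta}$ (the $\beta_r$ differ by integers). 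Hence the spread is at least $(|\Phi_a|-1)\cdot \frac{1}{2}\sqrt{\Delta} \geq \tfrac12(|\Phi_a|-1)$, which removes $\tau$ entirely. This gives $\tfrac12(|\Phi_a|-1)\leq 2\sqrt{2m}$, and after substituting $\varepsilon_a + 1 \leq |\Phi_a|$ and tidying the constants one should reach $(\varepsilon_a/3)^3 < 2m$.

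The main obstacle I anticipate is the exponent: the clean eigenvalue-spread argument above is only quadratic in $m$ (it gives roughly $\varepsilon_a^2 \lesssim m$), whereas the claimed bound is cubic, $\varepsilon_a^3 \lesssim m$. So the spectral-radius estimate $\theta_0 \leq \sqrt{2m}$ must be replaced by something exploiting that \emph{many} eigenvalues in $\Phi_a$ are large in absolute value, not just the top one. The real leverage should come from the moment identity $\sum_r m_r \theta_r^2 = 2m$ (with $m_r$ the multiplicities) combined with the fact that the $\Phi_a$-eigenvalues occupy a long arithmetic progression: if $|\Phi_a|$ eigenvalues are spaced at least $\tfrac12\sqrt{\Delta}$ apart and symmetric pressure forces them to range over an interval of length $\sim \varepsilon_a$, then $\sum \theta_r^2$ grows like the sum of squares of an arithmetic progression, i.e. cubically in $|\Phi_a|$. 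Making this precise — choosing the right progression, handling the quadratic-integer case versus the integer case, and verifying the constant $1/3$ and the strictness of the inequality — is where the actual work lies, and I would spend most of the effort turning the $\sum\theta_r^2 = 2m$ identity into the cubic lower bound on $m$.
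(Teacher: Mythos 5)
Your proposal correctly isolates the two ingredients that drive the theorem --- the trace identity $\operatorname{tr} A^2 = 2m$ and the minimum separation of eigenvalues in $\Phi_a$ --- but as it stands it has a genuine gap, in two places. First, the cubic step is only announced, never carried out. It is in fact routine to complete, and needs no ``symmetric pressure'' or choice of progression (the vaguest part of your plan): sort the $N = |\Phi_a|$ support eigenvalues by absolute value; pairwise separation $\delta$ forces $|\theta_{(j)}| \geq \delta(j-1)/2$, since the $j$ smallest-in-modulus points span an interval of length at least $\delta(j-1)$ inside $[-|\theta_{(j)}|, |\theta_{(j)}|]$. Hence
\[2m \;=\; \operatorname{tr} A^2 \;\geq\; \sum_{\theta_r \in \Phi_a} \theta_r^2 \;\geq\; \frac{\delta^2}{4}\cdot\frac{(N-1)N(2N-1)}{6} \;>\; \frac{\delta^2 (N-1)^3}{12}.\]
Second, and more seriously, the separation constant you actually establish is too weak for the stated bound: from ``the $\beta_r$ differ by integers'' you only get $\delta \geq \frac{1}{2}\sqrt{\Delta} \geq \frac{1}{2}$, and then the display above yields $\varepsilon_a^3/48 < 2m$, which does \emph{not} imply $(\varepsilon_a/3)^3 = \varepsilon_a^3/27 < 2m$. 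You need $\delta \geq 1$, and it follows from the tools you already quoted but used in the wrong direction: your inequality $2\pi/\tau \leq g\sqrt{\Delta}$ is an upper bound on the gap, whereas what closes the argument is that for the minimal period Lemma \ref{lem:1} gives $\delta \geq 2\pi/\tau$ and the remark after Theorem \ref{thm:1} gives $\tau \leq 2\pi$, hence $\delta \geq 1$. (Equivalently one can argue $g\sqrt{\Delta} \geq 1$, but integrality of the quantities $(\theta_0-\theta_r)/\sqrt{\Delta} = (\beta_0-\beta_r)/2$ needs the parity constraint on the $\beta_r$ coming from algebraic integrality, which your observation alone does not supply.) With $\delta \geq 1$ and $N \geq \varepsilon_a + 1$ from Lemma \ref{lem:2}, you get $2m > \varepsilon_a^3/12$, which is stronger than the claimed $(\varepsilon_a/3)^3 < 2m$.

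For comparison, the paper reaches the conclusion by a threshold argument rather than by summing squares along the support: ordering the full spectrum so that $\theta_0^2 \geq \theta_1^2 \geq \cdots$ gives $\theta_j^2 \leq 2m/(j+1)$; setting $k = \lfloor \sqrt[3]{2m} \rfloor$, at most $k$ support eigenvalues can exceed $|\theta_k|$ in modulus, and the remaining ones are packed in $[-|\theta_k|, |\theta_k|]$ with gaps at least $2\pi/\tau$, whence $|\Phi_a| \leq \frac{\tau}{\pi}\sqrt{2m/(k+1)} + k + 1 \leq 3\sqrt[3]{2m} + 1$ using $\tau \leq 2\pi$, and Lemma \ref{lem:2} finishes. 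So your route, once the spacing constant is repaired and the summation is executed, is a legitimate alternative --- indeed with a slightly better constant --- but what you submitted stops short of a proof: the first two paragraphs are a quadratic detour you yourself flagged as insufficient, and the paragraph containing the actual theorem is a plan whose key quantitative step is left open.
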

\begin{proof}
	Let $\theta_0,...,\theta_{n-1}$ denote the eigenvalues of $A$, possibly with repetition, and assume they are ordered in such a way that $\theta_0^2 \geq \theta_1^2 \geq ... \geq \theta_{n-1}^2$. Because the diagonal entries of $A^2$ contain the degrees of each vertex, it follows that
	\[\tr A^2 = 2m = \sum_{j = 0}^{n-1} \theta_j^2.\]
	As a consequence
	\[\theta_j^2 \leq \frac{2m}{j+1}.\]
	Let $k = \lfloor \sqrt[3]{2m} \rfloor$. Note that $k \leq n-1$. From Lemma \ref{lem:1}, the separation between any two distinct eigenvalues in $\Phi_a$ is at least $2\pi / \tau$. Thus, in the worst case where all eigenvalues $\theta_0,...,\theta_{k-1}$ belong to $\Phi_a$, we can still say that
	\[\frac{2 \pi}{\tau} (|\Phi_a| - k - 1) \leq 2|\theta_k|.\]
	Hence
	\begin{align}
	|\Phi_a| & \leq \frac{\tau}{\pi} \sqrt{\frac{2m}{k+1}} + k + 1 \\ & 
	< \sqrt[3]{2m}\left( \frac{\tau}{\pi} + 1\right) + 1 \\ & 
	\leq 3 \sqrt[3]{2m} + 1.
	\end{align}
	where the last inequality follows from $\tau \leq 2 \pi$ (Theorem \ref{thm:1}). We know from Lemma \ref{lem:2} that $\varepsilon_a + 1 \leq |\Phi_a|$, thus
	\[\left(\frac{\varepsilon_a}{3}\right)^3 < 2m.\]
\end{proof}

It is immediate from the theorem above that the number of vertices cannot be bound above by a linear function on $\varepsilon_a$. Adapting the proof to matrices $M$ as defined in the beginning of this section can lead to similar bounds in other quantum walk models. However, the specificities of the matrix could make the bound stronger or weaker. For instance, with the Laplacian matrix, the same technique allows only for a quadratic bound.

In particular, we know that all eigenvalues $0 = \lambda_{n-1} \leq ... \leq \lambda_0$ are non-negative and that
\[\sum_{j = 0}^{n-1} \lambda_j = 2m.\]
So we have the following.

\begin{theorem}
	Let $G$ be a graph with $m$ edges that, according to the quantum walk model defined by the Laplacian matrix, contains a periodic vertex $a$ with period $\tau$. Let $\varepsilon_a$ be the eccentricity of $a$. Then
	\[\left(\frac{\varepsilon_a}{3}\right)^2 < m.\]
\end{theorem}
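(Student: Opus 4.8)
The plan is to mirror the proof of the preceding adjacency-matrix theorem, substituting the Laplacian trace identity $\tr L = \sum_{j} \lambda_j = 2m$ for the quadratic identity $\tr A^2 = \sum_j \theta_j^2 = 2m$ that produced the cubic bound. The crucial structural difference is that the Laplacian spectrum is nonnegative and it is the \emph{bare} (not squared) eigenvalues that sum to $2m$; this weaker decay is exactly what will degrade the exponent from $3$ to $2$.

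First I would order the eigenvalues as $\lambda_0 \geq \lambda_1 \geq \cdots \geq \lambda_{n-1} = 0$ and observe that, since the top $k+1$ of them are each at least $\lambda_k$ and all are nonnegative, $(k+1)\lambda_k \leq \sum_j \lambda_j = 2m$, hence
\[\lambda_k \leq \frac{2m}{k+1}.\]
This is the Laplacian analogue of $\theta_j^2 \leq 2m/(j+1)$, but now with $\lambda_k$ itself in place of $\theta_k^2$. Next comes the balancing step: I would set $k = \lfloor \sqrt{2m}\rfloor$ rather than $\lfloor\sqrt[3]{2m}\rfloor$, since this is the value that equates the two contributions to $|\Phi_a|$. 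One checks $k \leq n-1$ because $2m \leq n(n-1)$ for a simple graph.

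By the corollary to Godsil's theorem the eigenvalues in $\Phi_a$ are integers, so Lemma \ref{lem:1} applies and forces a separation of at least $2\pi/\tau$ between any two of them. Because every Laplacian eigenvalue is nonnegative, those elements of $\Phi_a$ not among the top $k$ lie in the interval $[0,\lambda_k]$ of length $\lambda_k$ — here, in contrast to the adjacency case, there is no factor of $2$, as the eigenvalues do not straddle zero. This yields
\[\frac{2\pi}{\tau}\bigl(|\Phi_a| - k - 1\bigr) \leq \lambda_k,\]
and so, using $\lambda_k \leq 2m/(k+1) < \sqrt{2m}$ together with $k \leq \sqrt{2m}$,
\[|\Phi_a| \leq \frac{\tau}{2\pi}\lambda_k + k + 1 < \sqrt{2m}\left(\frac{\tau}{2\pi} + 1\right) + 1.\]

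Finally I would invoke $\tau \leq 2\pi$ (Theorem \ref{thm:1}) to bound the parenthesized factor by $2$, giving $|\Phi_a| < 2\sqrt{2m} + 1 = 2\sqrt{2}\,\sqrt{m} + 1$, and then Lemma \ref{lem:2} to replace $|\Phi_a|$ by $\varepsilon_a + 1$, so that $\varepsilon_a < 2\sqrt{2}\,\sqrt{m}$; since $2\sqrt{2} < 3$, squaring delivers $(\varepsilon_a/3)^2 < m$. I do not expect a genuine obstacle here, as the argument is a transcription of the adjacency proof with the linear trace bound replacing the quadratic one. The only points demanding care are the correct choice of exponent for $k$ that balances the two terms, and the numerical check that the resulting constant $2\sqrt{2}$ indeed falls below the target constant $3$.
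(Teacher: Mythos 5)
Your proof is correct and takes essentially the same route as the paper: the trace identity $\sum_j \lambda_j = 2m$ yields $\lambda_k \leq 2m/(k+1)$, Lemma \ref{lem:1} forces the $2\pi/\tau$ spacing among the elements of $\Phi_a$ below $\lambda_k$, and Lemma \ref{lem:2} converts the resulting bound on $|\Phi_a|$ into the eccentricity bound. Your only deviation is the harmless choice $k = \lfloor\sqrt{2m}\rfloor$ in place of the paper's $k = \lfloor\sqrt{m}\rfloor$, which gives the slightly sharper constant $2\sqrt{2}$ in place of $3$ and hence requires your final numerical check that $2\sqrt{2} < 3$.
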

\begin{proof}
	We have
	\[\lambda_j \leq \frac{2m}{j+1}.\]
	Let $k = \lfloor \sqrt{m} \rfloor$. From Lemma \ref{lem:1}, the separation between an two distinct eigenvalues in $\Phi_a$ is at least $2\pi / \tau$. Thus, in the worst case where all eigenvalues $\lambda_0,...,\lambda_{k-1}$ belong to $\Phi_a$, we can still say that
	\[\frac{2 \pi}{\tau} (|\Phi_a| - k - 1) \leq \lambda_k.\]
	Hence
	\begin{align}
	|\Phi_a| & \leq \frac{\tau}{\pi} \cdot \frac{m}{k+1} + k + 1 \\ & 
	< \sqrt{m}\left( \frac{\tau}{\pi} + 1\right) + 1 \\ & 
	\leq 3 \sqrt{m} + 1.
	\end{align}
	where the last inequality follows from $\tau \leq 2 \pi$ (Theorem \ref{thm:1}). We know from Lemma \ref{lem:2} that $\varepsilon_a + 1\leq |\Phi_a|$, thus
	\[\left(\frac{\varepsilon_a}{3}\right)^2 < m.\]
\end{proof}

\section{Discussion and questions}

The task of finding vertices admitting perfect state transfer at arbitrarily large distances has been studied now for at least 10 years. As far as I know, this paper provides the first non-trivial lower bounds on the size of an unweighed graph that would admit perfect state transfer relative to its diameter. This is because any vertex involved in perfect state transfer must be periodic, and the eccentricity of any vertex is at least half the diameter of the graph. One of the reasons that make this work relevant is that the size of the graph can be seen as the cost to construct the quantum system.

Our cubic (or quadratic, depending on the model) lower bound frustrates the expectation that small variations on arbitrarily long paths could provide families of graphs admitting perfect state transfer. It also shows that some trees do not admit perfect state transfer, providing some support to the conjecture that, in the adjacency matrix model, no tree other than $P_2$ or $P_3$ does (see \cite{CoutinhoLiu2}).

The dual line of investigation to what we expose here is to find families of relatively small graphs admitting perfect state transfer at arbitrarily large distances. As we pointed in the introduction, the best trade-off is achieved by Cartesian powers of $P_3$. In \cite[Section 5]{CoutinhoSpectrallyExtremal2}, some strategy on how to perturb these powers to reduce the size of the graph are briefly discussed, but we were not able to obtain a relevant asymptotic improvement.

We end with a list of questions related to our work.

\begin{enumerate}
	\item If there is a periodic vertex at time $2\pi/\lambda$, provide a lower bound on the size of the graph that depends on $\lambda$ better than the cubic bound we have. I am guessing this could be an exponential in $\lambda$. Note that the only examples we know of perfect state transfer happening at arbitrarily small times are coming from graphs which are very large (see \cite{AdaChanComplexHadamardIUMPST}).
	\item Improve our polynomial bounds to exponential, or find a family of graphs admitting perfect state transfer at arbitrarily large distances and size bounded above by a polynomial in their diameter.
\end{enumerate}

\section{Acknowledgements}

I thank Chris Godsil and Alastair Kay for some discussion on the topic of this paper. I also acknowledge a travel grant from the Dept. of Computer Science at UFMG.

\end{document}